\theoremstyle{definition}
\newtheorem{theorem}{Theorem}[section]
\theoremstyle{remark}
\begin{document}
\begin{frontmatter}
\title{Multi-variance replica exchange stochastic gradient MCMC for inverse and forward Bayesian physics-informed neural network}
\author[a,b]{Guang Lin}
\author[c]{Yating Wang}
\author[a]{Zecheng Zhang}
\cortext[cor1]{Corresponding author}
\address[a]{Department of Mathematics, Purdue University, West Lafayette, IN 47907, USA}
\address[b]{Department of Mechanical Engineering, Department of Statistics (courtesy), Purdue University, West Lafayette, IN 47907, USA}
\address[c]{Department of Mathematics, The University of Hong Kong, Pokfulam Road, Hong Kong SAR, China.}

\begin{abstract}
    
    Physics-informed neural network (PINN) has been successfully applied in solving a variety of nonlinear non-convex forward and inverse problems. However, the training is challenging because of the \textcolor{black}{non-convex} loss functions and the multiple optima in the Bayesian inverse problem. 
    In this work, we propose a multi-variance replica exchange stochastic gradient Langevin diffusion method \textcolor{black}{to tackle the challenge of the multiple local optima in the optimization and the challenge of the multiple modal posterior distribution in the inverse problem}.
     Replica exchange methods are capable of escaping from the local traps and \textcolor{black}{accelerating the convergence};
     two chains with different temperatures are designed where the low temperature chain aims for the local convergence, and the target of the high temperature chain is to travel globally and explore the whole \textcolor{black}{loss function entropy landscape}.
    However, it may not be efficient to solve mathematical inversion problems by using the vanilla replica method directly since the method doubles the computational cost in evaluating the forward solvers (likelihood functions) in the two chains. To address this issue, we propose to make different assumptions on the energy function estimation and this facilities one to use \textcolor{black}{solvers} of different fidelities in
    the likelihood function evaluation. \textcolor{black}{More precisely, one can use a solver with low fidelity in the high temperature chain while use a solver with high fidelity in the low temperature chain.}
    Our proposed method significantly lowers the computational cost 
    in the high temperature chain, meanwhile preserves the accuracy and converges very fast.
    We give an unbiased estimate of the swapping rate and give an estimation of the discretization error of the scheme.
    To verify our idea,  
    we design and solve four inverse problems which have multiple modes. 
    The proposed method is also employed to train the Bayesian PINN to solve the forward and inverse problems; faster and more accurate convergence has been observed when compared to \textcolor{black}{the stochastic gradient Langevin diffusion (SGLD) method and vanila replica exchange methods.}
\end{abstract}

\begin{keyword}
MCMC, Langevin diffusion, replica exchange, stochastic gradient descent, inverse problem, Bayesian physics-informed neural network
\end{keyword}

\end{frontmatter}

\section{Introduction}
	The development of deep neural networks (DNNs) has drawn people's attention to incorporate Bayesian approach into the learning of DNNs \cite{ahn2012bayesian, sgld, ding2014bayesian}. Stochastic gradient Markov chain Monte Carlo methods \cite{sgld, PSGLD,SGRLD, ma2015complete,chen2014stochastic, simsekli2016stochastic} have become attractive due to its potential for uncertainty quantification applications. In this family of approaches, the stochastic gradient Langevin dynamics \cite{sgld} algorithm was first proposed which bridges the gap between optimization and Bayesian inference.
	It was originates from the discretization of Langevin diffusion, but uses small batches to approximate the true gradients. The updating formula of SGLD resembles that of stochastic gradient descent (SGD), thus it can be naturally used in deep learning tasks with large datasets and is easy to implement in practice. By injecting a judicious choice of 
	noise when updating the DNN parameters, the method ensures that the generated samples will converge to the true posterior instead of the MAP \cite{sgld, vollmer2016exploration, sg-mcmc-convergence}. 
	
	Many variants of stochastic gradient MCMC methods have also been proposed to improve the convergence or stability of SGLD. In the case when the parameters of the DNN have complicated posterior density \cite{dauphin2014identifying, PSGLD, chen2014stochastic} such as different scales of variance or highly correlated, some preconditioning methods were introduced taking into account the geometric information. These include the preconditioned SGLD, Hessian approximated SGMCMC \cite{PSGLD, simsekli2016stochastic, ma2015complete, psgld-sa, hessian-sgld-sa}, where some preconditioners are proposed to approximation the Hessian information of the log posterior. Other improvements include the high-order integrators in the discretization of Langevin diffusion \cite{chen2015convergence}, for example, stochastic gradient Hamiltonian Monte Carlo \cite{chen2014stochastic}, stochastic gradient thermostats \cite{ding2014bayesian}. To combine simulated tempering with the traditional MCMC community, a replica stochastic gradient MCMC (reSG-MCMC) was recently brought up \cite{deng2020non}. 
	
	The main idea of the standard replica exchange Langevin dynamics is to use multiple diffusion processes with different temperatures. The low temperature diffusion process can exploit the local area, and the high temperature one will explore the global domain. With a suitable swapping criteria, the replica exchange method admits a trade-off between global exploration and local exploitation, which accelerates the convergence rate of the samples to the invariant correct distribution. It is feasible and can be naturally extended for parallel computing. However, it is important to propose a good swapping scheme which aims to overcome the issue of large bias introduced by using mini-batches. In \cite{deng2020non}, the authors proposed 
	an adaptive replica exchange SG-MCMC method with the help of stochastic approximation, which has a correction term to reduce the bias from the swaps based on theoretical analysis. 
	
	Compared to the other MCMC sampling methods, the replica exchange methods have the fast convergence property. 
	However, it usually requires a higher computational cost due to the replica computation high temperature chain. To be specific, take as an example
	solving inverse physical problems under the Bayesian framework with efficient sampling algorithms, which has been a favored approach in the last century \cite{stuart2010inverse}. 
	In the Bayesian approach, one treats the network weight parameters as random variables and infers them conditioned on limited data. The likelihood function consists of the loss between the the network output and observation data. 
	For most of the physical problems, the computation of the loss involves the evaluation of the forward solver which maps the inverse target to the observations. 
	Due to the mathematical properties of the equations, such as problems with multiscale nature, this can be a very time consuming process \cite{efendiev2006preconditioning, chung2020multi}. Hence, there should be a balance between the fast convergence and high computational cost, and a modification to accelerate the computation of two likelihood (energy) functions in the two chains is needed.
	
	Our idea is to overcome the difficulty by applying a coarser solver in the high temperature chain to explore the model parameter space. The motivation is that the high temperature chain is designed to guide the particle travel globally and escape from the local optimum trap, and it is practical to use a computational efficient coarse solver with little sacrifice on the accuracy. 
    Since the errors in the forward solver and the other sources of errors contribute to the energy function estimation, one needs to make different assumptions on the energy functions in the two chains.
    To the extent of our knowledge, this work is the first to study the theoretical and practical applications based on this idea. 
	
	The key contributions of this work are
\begin{itemize}
\item We introduce a multi-variance replica exchange method for Bayesian physics-informed neural networks. Adopting different variances for the energy function estimators in different diffusion processes allows us to take advantage of different model fidelities to perform numerical simulations.

\item We derive a correction term to ensure the reduction of bias in the swapping scheme and prove the discretization error of the method. 

\item The proposed multi-variance replica exchange method is feasible in general Bayesian learning tasks. In particular, we use it in training the Bayesian PINN (physics-informed neural network).
\textcolor{black}{More precisely, physical laws are integrated in the deep neural network by the constraints in the loss; 
and we use noisy training data, and the outcome of the network can provide uncertainties of the prediction.}

\item The proposed method is applied to solve both forward and inverse PDE problems. The numerical results demonstrate the improved  efficiency in capturing both multiple modes and infinite modes in some inverse problems. It also shows \textcolor{black}{the better predictions and faster convergence} with reliable uncertainties in solving PINN problems \textcolor{black}{when compared to the classical gradient based methods}.

\end{itemize}	
	
	The outline of the paper is as follows. In Section \ref{sec:prelim}, we review some backgrounds in stochastic gradient Langevin dynamics, replica exchange stochastic gradient MCMC algorithms, and Bayesian framework for inverse problems \ref{sec:bayes}. Our main algorithm is presented in Section \ref{sec:main_method}. Numerous numerical examples are shown to illustrate the efficiency of proposed method in Section \ref{sec:numerical}.

\section{Preliminaries}\label{sec:prelim}
\subsection{Replica exchange Langevin diffusion (reLD)}
Diffusion-based Markov Chain Monte Carlo (MCMC) algorithms have become increasingly popular in the
recent years due to the real applications performance and theoretic supports \cite{nguyen2019non,csimcsekli2017fractional, simsekli2020fractional,raginsky2017non}. One of the most popular choices is the Langevin diffusion which is a stochastic differential equation as follows:
\begin{align}
    d\beta_t = -\nabla U(\beta_t)dt+\sqrt{2\tau}dW_t.
    \label{bg_lg}
\end{align}
Here $\beta_t\in\mathbb{R}^d$, $\tau\in\mathbb{R}^{+}$ is called temperature; $W_t$ is the standard Brownian motion in $\mathbb{R}^d$.
$U(\cdot)$ is the energy function which is often convex.
Denote by $D = \{d_i\}_{i= 1}^{N} $ the train data, where $d_i$ is a data point, one choice of the energy function is:
\begin{align}
    U(\beta) = -log(p(\beta))-\sum_{i = 1}^{N}log (p(d_i|\beta)), 
\end{align}
where $\beta\in\mathbb{R}^d$ and $p(\beta)$ is the prior and $p(d_i|\beta)$ is the likelihood.
People have established that under mild conditions, the solution process $\{\beta_t\}_{t\geq 0}$ will converge to
the invariant distribution
$\pi(\beta_t):= e^{-U(\beta_t)/\tau}$ \cite{roberts2002langevin, nguyen2019non}. 
However, when the temperature $\tau$ is large, the flattened distribution is less concentrated around the global optimum
and the geometric connection to the global
minimum is affected \cite{raginsky2017non}. 
As a result, the particle is exploring the whole loss function landscape and can travel globally \cite{zhang2017hitting}.
 On the other hand, a low-temperature results in the exploitation:
 the particle is exploiting the local region; the local optima can be reached quickly; however, the particle is hard to escape from the local trap (local optima) which is harmful for the global convergence \cite{deng2020non}.

This motivates people designing the mixed algorithm; the replica exchange Langevin diffusion (reLD) is then designed.
In this algorithm, two chains with different temperatures are used,
\begin{align*}
    &d\beta_t^{1} = -\nabla U(\beta_t^{1})dt+\sqrt{2\tau_1}dW_t^1\\
    &d\beta_t^{2} = -\nabla U(\beta_t^{2})dt+\sqrt{2\tau_2}dW_t^2.
\end{align*}
People can switch the particles $(\beta^1_t, \beta^2_t)$ between two chains. 
More precisely, 
let $r>0$ be a constant and if one follows the 
swapping rate ,
$rS(\beta^1_t, \beta^2_t)dt$, where,
\begin{align}
    S(\beta^1_t, \beta^2_t) = e^{(\frac{1}{\tau_1}-\frac{1}{\tau_2})(U(\beta^1_t)-U(\beta^2_t))},
    \label{bg_swap_rate}
\end{align}
the system then converges to the invariant distribution; the density is given as follow:
\begin{align}
    \pi(\beta^1, \beta^2) \sim exp(-\frac{U(\beta^1)}{\tau_1}-\frac{U(\beta^2)}{\tau_2}).
    \label{bg_inv_dis}
\end{align}

\subsection{Stochastic Gradient
Langevin Dynamics (SGLD)}
The MCMC aalgorithm can can be used as an optimization algorithm. Consider the discretization of (\ref{bg_lg}),
\begin{align}
    b_{k+1}^{0} = b_{k}^{0}-\eta_k\nabla U(b_k^{0})+\sqrt{2\eta_k\tau} \varepsilon_k, 
    \label{bg_sg_disc}
\end{align}
where $\{b_k^{0}\}_{k = 1,..., N}\subset \mathbb{R}^d$ solves the equation; $\eta_k>0$ is a constant and can be understood as the learning rate; $\varepsilon_k$ is a standard d-dimensional Gaussian vector. Equation (\ref{bg_sg_disc}) is a stochastic gradient descent (SGD) algorithm with some perturbations, the discretized equation returns to the gradient descent algorithm when $\tau = 0$. 
The connection between
Langevin MCMC and the gradient descent has been recently established in \cite{dalalyan2017further, simsekli2020fractional} for strongly convex $U(\cdot)$. Moreover, \cite{raginsky2017non, xu2017global} proved non-asymptotic guarantees for this perturbed scheme.
In real applications, $N$ is large and the likelihood function is hard to evaluate \cite{efendiev2006preconditioning, chung2020multi}, this brings difficult in calculating the energy function; hence the approximation is often used and this will result in an error in the convergence.
Furthermore, in physical applications, calculating the likelihood usually involves evaluating the forward solver, which also contributes an error in the convergence.
In the theoretical analysis, if we denote $\hat{U}(\cdot)$ as the estimator of the energy function, people usually assume that
\begin{align}
    \hat{U}(\beta)\sim N(U(\beta), \sigma^2),
    \label{bg_energy_assumption}
\end{align}
where $\sigma$ is 
the variance and is the reflection of the error.

Motivated by the stochastic gradient descent algorithm (SGD), people can build an estimator of the energy function with a random subset of samples.
More precisely,
\begin{align}
    \hat{U}(\beta) = -log(p(\beta))-\frac{N}{n}\sum_{i = 1}^{N}log (p(d_{s_i}|\beta)), 
    \label{bg_approx_energy}
\end{align}
where $\{s_i\}_{i = 1}^n$ is the index of a batch of samples. We hence have the stochastic gradient Langevin diffusion (SGLD),
\begin{align}
    b_{k+1}^{SGLD} = b_{k}^{SGLD}-\eta_k\nabla \hat{U}(b_k^{SGLD})+\sqrt{2\eta_k\tau} \varepsilon_k.
\end{align}
In \cite{teh2016consistency}, the authors have shown that SGLD
asymptotically converges to a unique invariant distribution. 

\subsection{Replica Exchange Stochastic Gradient
Langevin Dynamics (reSGLD)}
In the previous section, we have introduced the reLD and the optimization algorithm constructed using the Langevin diffusion.
It is then natural to use the reLD to build an optimization algorithm, the reSGLD is then proposed. The discretization is the same 
as the SGLD and we have,
\begin{align}
    &b_{k+1}^{re, 1} = b_{k}^{re, 1}-\eta_k\nabla \hat{U}(b_k^{re, 1})+\sqrt{2\eta_k\tau_1} \varepsilon_k, \\
    &b_{k+1}^{re, 2} = b_{k}^{re, 2}-\eta_k\nabla \hat{U}(b_k^{re, 2})+\sqrt{2\eta_k\tau_2} \varepsilon_k, 
    \label{bg_disc}
\end{align}
where $\tau_1<\tau_2$ are the temperatures, $(b_k^{re, 1}, b_k^{re, 2})$ are the corresponding solutions. 
By following the estimation (\ref{bg_energy_assumption}) and allowing swapping, i.e., $(b_k^{re, 1}, b_k^{re, 2}) = (b_{k+1}^{re, 2}, b_{k+1}^{re, 2})$
with a rate which is an unbiased estimation\footnote{It should be noted that the original swapping rate (\ref{bg_swap_rate}) becomes bias due to the stochastic approximation of the energy function.} of (\ref{bg_swap_rate}):
\begin{align*}
    \hat{S}(b^1, b^2) = exp(\frac{1}{\tau_1}-\frac{1}{\tau_2})(\hat{U}(b^1)-\hat{U}(b^2) -(\frac{1}{\tau_1}-\frac{1}{\tau_2})\sigma^2 ),
\end{align*}
The authors theoretically give the discretization error of the scheme and the numerical results also surpass the benchmarks of several datasets.

\section{Bayesian Framework of the Inversion Problems}\label{sec:bayes}
One can solve the inverse problem using the Bayesian framework.
A favored approach is to sample the inverse target by some MCMC algorithm. Let the observation of the problem be $y$ and the quantity of interest in the inverse problem be $k$. 
The target is to generate samples that approximate the posterior $P(k|y)$. By the Bayes' Theorem, 
$$
P(k|y) \propto P(y|k)\cdot P(k),
$$
where $P(k)$ is the prior and $P(y|k)$ is the likelihood.
The key in the implementation is to design and evaluate the likelihood function.
This usually involves solving an forward problem given the proposal $k$ and measures the difference between the observation and the solution of the forward problem.
In addition to the error in the forward solver, there are other sources of errors \cite{efendiev2006preconditioning, li2019diffusion} such as the errors in the real observation.
One can assume the combined error follows a normal distribution with standard deviation $\sigma$, it follows that,
\begin{align}
    P(y|k) \propto \exp \left \{ {-\frac{\|y-\mathcal{F}(k)\|_2^2}{\sigma^2}} \right \},
\end{align}
where $\mathcal{F}$ denotes the forward solver and the $\sigma$ can be viewed as the precision of the observations and the solver.
There are two difficulties in practice. 
The first one is how to generate the proposals. This requires the design of the transition probability and one usually seeks ideas from the MCMC literature.
One approach is to use the reinforcement learning to learn a transition probability function \cite{chung2020multi}.
In this work, we are going to propose samples under the framework of Langevin dynamics.
Another difficulty is the choice of forward solver.
The simulation process is usually slow in particular if one requires high accuracy.
Though Replica exchange method accelerates convergence in terms of the MCMC procedure, one still needs to run the forward solver several times due to its multi-chain property. This greatly increases 
the computational cost.
Because of this limitation, the replica exchange method may not be directly applied to solve physical inverse problems.
In this work, we propose a method which greatly reduces the computation time, and meanwhile preserves the fast convergence property of the replica exchange method. 

\section{Proposed Method} \label{sec:main_method}
In this section we are going to present the proposed method. 
Replica exchange methods have been proved to capture the global optima successfully; however, it may be slow since we need to evaluate the likelihood functions twice in the low and high temperature chains separately. The problem becomes exacerbated in solving inversion physical problems in the Bayesian framework. This is because people need to evaluate an forward solver in calculating the likelihood functions, which is extremely time consuming; 
if we repeat the process twice, this will largely increase the computation time.
People hence need to optimize the replica method to use it in real world problems.

The question is: do we need to use the same solver in calculating the energy functions in two chains? Our proposal is to use a fine solver in the chain with low temperature while applying a coarser solver in the high temperature chain which travels globally.
The motivation of the proposed method is: the high temperature chain is exploring the region, its job is to get rid of the local optimum and find the potential region where the global optima lays in; it hence requires less accuracy. For the better convergence to the global optima, once the particle is closed to the region, we use the low temperature chain which has a fine solver for the better accuracy.
Basing on this reasoning, we then can use a coarse solver in the high temperature chain and assume a larger variance in the energy estimation. Our experiments show that the accuracy will not be compromised and we prove that the discretization error will not be changed.
We hence can benefit from the computation cost saved in the coarse solver evaluation.

As we have discussed in the last section, 
the solver contributes errors in the energy functions; together with error in the stochastic gradients 
one may assume that the estimations satisfies, for example, (\ref{bg_energy_assumption}). 
We propose to use different solvers; this means that the energy function estimations are different, hence a new swapping rate should be derived.
Now let $(b^1_k, b^2_k)$ denote the solution of the proposed scheme, the scheme can be summarized as follow, 
\begin{align}
    &b_{k+1}^{1} = b_{k}^{1}-\eta_k\nabla \hat{U}_1(b_k^{1})+\sqrt{2\eta_k\tau_1} \varepsilon_k, \\
    &b_{k+1}^{2} = b_{k}^{2}-\eta_k\nabla \hat{U}_2(b_k^{2})+\sqrt{2\eta_k\tau_2} \varepsilon_k, 
    \label{pp_dis}
\end{align}
where $\hat{U}_1(b_k^1)\sim N(U(b_k^1), \sigma_1^2)$ and $\hat{U}_2(b_k^2)\sim N(U(b_k^2), \sigma_2^2)$ are the two estimators. We denote $(\beta^1, \beta^2)$ as the continuous solution with the different estimator,
then the new estimator is summarized in the following theorem.

\begin{theorem}
Let $a_1, a_2>0$ and satisfies $a_1+a_2 = 1$; set $b_i =  a_i^2$ 
for $i = 1,2$ and denote $\tau_\delta = \frac{1}{\tau_1}-\frac{1}{\tau_2}$ .
Then the estimator:
\begin{align}
   \Hat{S} = e^{\tau_\delta\bigg(a_1\big(\Hat{U}_1(\beta^1)-\Hat{U}_1(\beta^2)\big)-a_2\big(\Hat{U}_2(\beta^1)-\hat{U}_2(\beta^2) \big)
    - (a_1\sigma_1  + a_2\sigma_2)^2\tau_\delta
     \bigg) }
     \label{pp_rate}
\end{align}
is unbiased.
\end{theorem}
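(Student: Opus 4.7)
The plan is to recognize that, once we grant the Gaussian assumption on the energy-function estimators from \eqref{bg_energy_assumption}, the entire argument collapses to a one-line moment generating function (MGF) computation for a normal random variable. Specifically, write $\hat S = e^{A + C}$, where
\begin{equation*}
A := \tau_\delta \Big( a_1\big(\hat U_1(\beta^1) - \hat U_1(\beta^2)\big) + a_2\big(\hat U_2(\beta^1) - \hat U_2(\beta^2)\big) \Big),
\qquad
C := -\tau_\delta^2 (a_1\sigma_1 + a_2\sigma_2)^2,
\end{equation*}
so that $A$ is Gaussian (being a linear combination of Gaussians) and $C$ is deterministic. Showing $E[\hat S] = S(\beta^1,\beta^2) = e^{\tau_\delta(U(\beta^1)-U(\beta^2))}$ reduces to showing $E[e^A] = e^{\tau_\delta(U(\beta^1)-U(\beta^2))-C}$, which is exactly what the MGF identity delivers provided the mean and variance of $A$ line up with what $C$ corrects for. (I read the $-a_2$ in the statement as a typo for $+a_2$; with the minus sign the mean of the exponent is $\tau_\delta(a_1-a_2)(U(\beta^1)-U(\beta^2))$, which cannot match the target unless $a_2=0$, so I will work with $+a_2$.)

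Concretely, I would proceed in four short steps. First, compute $E[A]$: by linearity and $\hat U_i(\beta)\sim N(U(\beta),\sigma_i^2)$, $E[A] = \tau_\delta (a_1+a_2)(U(\beta^1)-U(\beta^2)) = \tau_\delta(U(\beta^1)-U(\beta^2))$, using $a_1+a_2=1$. Second, compute $\mathrm{Var}(A)$: treating the estimators at the two spatial points as independent (the two chains are run independently) but treating $\hat U_1(\beta^j)$ and $\hat U_2(\beta^j)$ at a common $\beta^j$ as driven by a shared noise source (so that the coarse- and fine-solver errors are co-monotonic), one gets $\mathrm{Var}(a_1\hat U_1(\beta^j)+a_2\hat U_2(\beta^j)) = (a_1\sigma_1+a_2\sigma_2)^2$ and hence $\mathrm{Var}(A) = 2\tau_\delta^2(a_1\sigma_1+a_2\sigma_2)^2$. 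Third, apply the Gaussian MGF $E[e^A] = e^{E[A] + \mathrm{Var}(A)/2}$ to obtain
\begin{equation*}
E[e^A] = \exp\!\Big(\tau_\delta(U(\beta^1)-U(\beta^2)) + \tau_\delta^2(a_1\sigma_1+a_2\sigma_2)^2\Big).
\end{equation*}
Fourth, multiply by $e^C$ and observe the variance-generated factor is cancelled exactly by $C$, yielding $E[\hat S] = e^{\tau_\delta(U(\beta^1)-U(\beta^2))} = S(\beta^1,\beta^2)$.

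The main obstacle — and really the only substantive modeling question — is pinning down the joint distribution of $(\hat U_1, \hat U_2)$ at a common $\beta$, because the form of the correction term determines it uniquely. The quadratic $(a_1\sigma_1+a_2\sigma_2)^2$ appearing in \eqref{pp_rate} is consistent with perfectly (positively) correlated estimators at the same $\beta$; if instead $\hat U_1$ and $\hat U_2$ were assumed independent, the variance would collapse to $a_1^2\sigma_1^2+a_2^2\sigma_2^2 = b_1\sigma_1^2+b_2\sigma_2^2$, which is where the otherwise unused constants $b_i = a_i^2$ introduced in the statement would naturally appear. I would therefore make the co-monotonic noise assumption explicit at the start of the proof (and comment on the alternative independent case, which produces the $b_i$ variant), and then the remainder is just the routine MGF computation above.
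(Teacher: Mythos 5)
Your proposal is correct, and your diagnosis of the sign typo is confirmed by the paper's own proof, which works throughout with $+a_2$ in the exponent. The route is genuinely different in presentation, though: the paper posits $\hat U_1(\beta^1)-\hat U_1(\beta^2) = U(\beta^1)-U(\beta^2)+\sqrt{2}\sigma_1 W_1$ and $\hat U_2(\beta^1)-\hat U_2(\beta^2) = U(\beta^1)-U(\beta^2)+\sqrt{2}\sigma_2 W_1$ with a \emph{common} Gaussian $W_1$, embeds $\hat S$ into a continuous family $\{\hat S_t\}_{t\in[0,1]}$ driven by Brownian motion, computes the It\^{o} differential, observes that the drift cancels so that $d\hat S_t = \sqrt{2}\tau_\delta(a_1\sigma_1+a_2\sigma_2)\hat S_t\, dW_t$ is a martingale, and sets $t=1$. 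Your argument replaces this with the Gaussian MGF identity $E[e^A]=e^{E[A]+\mathrm{Var}(A)/2}$ applied directly; the two are the same computation in different clothing (the martingale property of geometric Brownian motion \emph{is} the statement $E[e^{\sigma W_t-\sigma^2 t/2}]=1$), but yours is shorter and, more importantly, forces the dependence structure into the open. The modeling point you isolate is exactly the assumption the paper makes silently by reusing the same $W_1$ in both displayed differences: the correction term $(a_1\sigma_1+a_2\sigma_2)^2$ is only consistent with perfectly positively correlated estimator noise at a common $\beta$, whereas independent fine- and coarse-solver errors would give $a_1^2\sigma_1^2+a_2^2\sigma_2^2=b_1\sigma_1^2+b_2\sigma_2^2$ --- which, as you note, is the only place the otherwise-unused constants $b_i=a_i^2$ from the theorem statement would naturally appear. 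Making that hypothesis explicit, as you propose, is an improvement on the paper's exposition; the one thing I would add is a sentence defending (or at least flagging) why errors from solvers of different fidelity should be co-monotonic, since that is a substantive physical claim rather than a notational convenience.
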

\begin{proof}
By the assumption on $\Hat{U}_1$ and $\Hat{U}_2$ we have,
\begin{align*}
    & \Hat{U}_1(\beta^1)-\Hat{U}_1(\beta^2) = 
    U(\beta^1)-U(\beta^2)+\sqrt{2}\sigma_1W_1,\\
    & \Hat{U}_2(\beta^1)-\Hat{U}_2(\beta^2) = 
    U(\beta^1)-U(\beta^2)+\sqrt{2}\sigma_2W_1
\end{align*}
\textcolor{black}{where $W_1$ can be taken as a Brownian motion when $t = 1$ which is a Gaussian vector}. The switching rate $\Hat{S}_t$ for $t\in[0, 1]$ is then given and simplified as follow:
\begin{align*}
    \Hat{S}_t &= exp\bigg(\tau_\delta\bigg[
    a_1 \big( \Hat{U}_1(\beta^1)-\Hat{U}_1(\beta^2) \big)+
    a_2 \big( \Hat{U}_2(\beta^1)-\Hat{U}_2(\beta^2) \big)\\
    &-\tau_\delta a_1^2\sigma_1^2 t -\tau_\delta a_2^2\sigma_2^2 t-
    2\tau_\delta a_1a_2\sigma_1\sigma_2 t \bigg]\bigg)\\
    &= 
    exp\bigg(\tau_\delta\bigg[U(\beta^1)-U(\beta^2)
    -\tau_\delta a_1^2\sigma_1^2 t -\tau_\delta a_2^2\sigma_2^2 t-
    2\tau_\delta a_1a_2\sigma_1\sigma_2t\\
    &+(\sqrt{2}a_1\sigma_1+\sqrt{2}a_2\sigma_2) W_t \bigg] \bigg)
\end{align*}
The partial derivatives of $\Hat{S}_t$ are straightforward as follow,
\begin{align*}
    &\frac{d \Hat{S}_t}{dt } = -\tau_\delta^2(a_1^2\sigma_1^2+a_2^2\sigma_2^2+2a_1a_2\sigma_1\sigma_2)\Hat{S}_t,\\
    &\frac{d \Hat{S}_t}{d W_t } = 
    \sqrt{2}\tau_\delta(a_1\sigma_1+a_2\sigma_2)\Hat{S}_t,\\
    &\frac{d^2 \Hat{S}_t}{d W_t^2 } = 
    2\tau_\delta^2(a_1\sigma_1+a_2\sigma_2)^2\Hat{S}_t.
\end{align*}
Ito's lemma then reads,
\begin{align*}
    d \Hat{S}_t = \sqrt{2}\tau_\delta(a_1\sigma_1+a_2\sigma_2)\Hat{S}_tdW_t.
\end{align*}
Thus $\{\Hat{S}_t\}_{t\in[0,1]}$ is Martingale. Set $t = 1$, the desired estimator is unbiased.
\end{proof}

\subsection{Discretization Error}
In this section, we comment on the discretization error of the proposed method.
We first introduce the strong form solution $\beta_t$ which satisfies:
\begin{align}
    d\beta_t = -\nabla G(\beta_t) dt+\Sigma_tdW_t.
    \label{strong_form}
\end{align}
Here 
$G = \big[ U(\beta_t^1); U(\beta_t^2) \big]$, $\beta_t^1, \beta_t^2\in\mathbb{R}^d$ and $U(\cdot)$ is the energy function. $W_t\in\mathbb{R}^{2d}$ is a Brownian motion and $\Sigma_t$ is a stochastic process which switches between 
$\begin{pmatrix}
  \sqrt{2\tau_1 I_d} & 0\\ 
  0 & \sqrt{2\tau_2 I_d}
\end{pmatrix}$ and
$\begin{pmatrix}
  \sqrt{2\tau_2 I_d} & 0\\ 
  0 & \sqrt{2\tau_1 I_d}
\end{pmatrix}$
with the probability $rS(\beta^1_t, \beta^2_t)dt$; here $I_d\in\mathbb{R}^{d\times d}$ denotes the identity matrix and $S(\cdot, \cdot)$ is given in (\ref{bg_swap_rate}).

Let us denote $\beta^{\eta}(k) = [\beta^{\eta, 1}(k); \beta^{\eta, 2}(k)]$ as the solution of the full discretization scheme; that is, $\beta^{\eta}(k)$ satisfies,
\begin{align}
    \beta^{\eta}(k+1) = \beta^{\eta}(k)-\eta\nabla \hat{G}_1(\beta^{\eta}(k))dt+\sqrt{\eta}\hat{\Sigma}^{\eta}(k)\xi_k\
    \label{full_discretization}
\end{align}
where $\eta$ is a fix time step and $\xi_k\in\mathbb{R}^{2d}$ is a standard Gaussian distribution. $\hat{G}(\beta^{\eta}_t)$ is a stochastic process and switches between 
$\hat{G}_1(\beta^{\eta}(k)) = \big[ \Hat{U}_1\big(\beta^{\eta, 1}(k)\big); \Hat{U}_2\big(\beta^{\eta, 2}(k)\big) \big]$ and 
$\hat{G}_2 = \big[ \Hat{U}_1\big(\beta^{\eta, 2}(k)\big); \Hat{U}_2\big(\beta^{\eta, 1}(k)\big) \big]$ with the probability $r\hat{S}(\beta^{\eta, 1}(k), \beta^{\eta, 2}(k))\eta$. $\hat{\Sigma}(t)$ is defined the same as $\Sigma_t$ but is switching with the $r\hat{S}(\beta^{\eta, 1}(k), \beta^{\eta, 2}(k))\eta$.
We also define the semi-discretized solution $\{\beta_t^{\eta}\}_{t\geq 0}$ as the linear interpolation of $\{\beta^{\eta}(k)\}_{k\geq 1}$ which satisfies,
\begin{align}
    \beta_t^{\eta} = \beta_0^{\eta} - \int_0^t\nabla \hat{G}(\beta_{[s/\eta]\eta}^{\eta})ds
    +\int_0^t\hat{\Sigma}^{\eta}_{[s/\eta]\eta} dW_s.
    \label{semi_discretization}
\end{align}
Here $[\cdot]$ denotes the floor function and $\hat{\Sigma}^{\eta}_{[s/\eta]\eta}$ follows the same trajectory as $\hat{\Sigma}^{\eta}(s/\eta)$.
To prove the discretization theorem. Two assumptions on the energy functions should be made. We first assume that the energy function is $\alpha-$ smooth, more precisely, there exists $\alpha>0$ such that for every $p, q\in\mathbb{R}^d$, we have:
\begin{align}
    \|\nabla U(p)-\nabla U(q)\|\leq \alpha\|p-q\|. \label{assumption1}
\end{align}
We also assume the dissipativity of the energy function. That is, there are constants $a>0$ and $b\geq 0$ such that 
\begin{align}
    (p, \nabla U(p))\geq a\|p\|^2-b,\label{assumption2}
\end{align}
true for all $p\in\mathbb{R}^d$ and $(\cdot, \cdot)$ denotes the inner product. We also quantify the errors in the swapping rate and energy function estimator; that is, 
we have $\Hat{S}(\beta^1_k, \beta^2_k) = S(\beta^1_k, \beta^2_k)+\Phi^1_k$ and $\nabla \hat{U}_k = \nabla U_k+\Phi^2_k$, where $\Phi^1_k$ and $\Phi^2_k$ are in $\mathbb{R}^d$ and 
are \textcolor{black}{the functions measuring the error in the swapping rate and energy function estimations at $k-th$ iteration.}
\begin{theorem}
If we have the smoothness and dissipative assumptions, and we further assume that $\eta$ satisfies $0<\eta<1\cap \frac{a}{\alpha^2}$,
then there exist constants $C_1, C_2, C_3$ such that,
\begin{align}
    \mathbb{E}[\sup_{0\leq t\leq T}\|\beta_t-\beta_t^{\eta}\|^2]
    \leq C_1\eta+C_2\max_{k}\mathbb{E}[\|\Phi^2_k\|^2]+C_3\max_k\sqrt{\mathbb{E}[|\Phi^1_k|^2]},
\end{align}
\end{theorem}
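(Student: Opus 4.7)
The plan is the standard strategy for strong SDE discretization bounds: subtract integral forms, decompose the residual into drift and diffusion pieces, bound each piece using the smoothness and dissipativity assumptions together with the structure of the switching processes, and close the estimate via Gronwall's inequality. Subtracting (\ref{strong_form}) and (\ref{semi_discretization}) gives
$$\beta_t - \beta_t^\eta = -\int_0^t \big[\nabla G(\beta_s) - \nabla \hat G(\beta^\eta_{[s/\eta]\eta})\big] ds + \int_0^t \big[\Sigma_s - \hat \Sigma^\eta_{[s/\eta]\eta}\big] dW_s,$$
and I would split the drift integrand, by inserting and subtracting, into three pieces: $\nabla G(\beta_s)-\nabla G(\beta^\eta_s)$ (bounded by $\alpha\|\beta_s-\beta^\eta_s\|$ via (\ref{assumption1}) applied blockwise, since $G=[U;U]$), $\nabla G(\beta^\eta_s)-\nabla G(\beta^\eta_{[s/\eta]\eta})$ (the within-step discretization gap), and $\nabla G(\beta^\eta_{[s/\eta]\eta})-\nabla \hat G(\beta^\eta_{[s/\eta]\eta})=\Phi^2_{[s/\eta]}$.

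To handle the within-step gap I would first establish a uniform-in-$\eta$ second-moment bound $\sup_{s\leq T}\mathbb E\|\beta^\eta_s\|^2 \le M$ by a discrete Lyapunov argument, which is precisely where the dissipativity (\ref{assumption2}) and the smallness condition $\eta<a/\alpha^2$ enter: this choice makes the one-step contraction constant $2a\eta-\alpha^2\eta^2$ positive. Given the moment bound, a direct one-step computation on (\ref{full_discretization}) combined with smoothness yields $\mathbb E\|\beta^\eta_s-\beta^\eta_{[s/\eta]\eta}\|^2 = O(\eta)$, which after another application of $\alpha$-smoothness produces the $C_1\eta$ term. The $\Phi^2$ piece contributes directly to $C_2\max_k\mathbb E\|\Phi^2_k\|^2$ after Cauchy-Schwarz on the drift integral.

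The crux is the diffusion mismatch, since $\Sigma_s$ and $\hat\Sigma^\eta_{[s/\eta]\eta}$ switch between the same two matrices but with different intensities $rS$ and $r\hat S=rS+r\Phi^1$. I would couple the two jump processes on a common probability space via a single Poisson random measure of rate $r\max(S,\hat S)$ with a thinning based on the ratio of intensities, so that at each time $\mathbb P(\Sigma_s\neq\hat\Sigma^\eta_{[s/\eta]\eta})$ is bounded by a constant times $|\Phi^1|$. On the disagreement event the integrand $\|\Sigma-\hat\Sigma\|$ is bounded by the (finite) maximal temperature, hence
$$\mathbb E\!\int_0^T\!\|\Sigma_s-\hat\Sigma^\eta_{[s/\eta]\eta}\|^2 ds \;\le\; C\, r\, T\,\max_k \mathbb E|\Phi^1_k| \;\le\; C\, r\, T\,\sqrt{\max_k\mathbb E|\Phi^1_k|^2}$$
by Cauchy-Schwarz, and the Burkholder-Davis-Gundy inequality applied to the stochastic integral produces the $C_3\sqrt{\max_k\mathbb E|\Phi^1_k|^2}$ term. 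Squaring, taking $\sup_{0\le t\le T}$ and expectation, and combining these pieces with Cauchy-Schwarz on the drift integral and BDG on the stochastic integral yields an inequality of the form
$$\mathbb E\!\sup_{0\le s\le t}\|\beta_s-\beta^\eta_s\|^2 \le K\!\int_0^t \mathbb E\!\sup_{0\le u\le s}\|\beta_u-\beta^\eta_u\|^2 ds + K_1\eta + K_2\max_k\mathbb E\|\Phi^2_k\|^2 + K_3\sqrt{\max_k\mathbb E|\Phi^1_k|^2},$$
after which Gronwall closes the estimate, with $C_1,C_2,C_3$ absorbing the exponential factor $e^{KT}$. The \emph{main obstacle} is the diffusion-switching coupling: the jump in $\Sigma$ is a non-smooth, non-additive function of the state, so Ito-type local arguments do not apply directly and one must work pathwise through a simultaneous thinning construction driven by a common Poisson random measure. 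A subtler subissue is that the discrete Lyapunov bound must remain valid under the gradient noise $\Phi^2$, which typically requires an auxiliary uniform-in-$k$ hypothesis $\sup_k\mathbb E\|\Phi^2_k\|^2<\infty$ to dominate the stochastic perturbation inside the Lyapunov recursion.
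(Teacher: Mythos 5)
Your proposal follows essentially the same route as the paper's own (outline of a) proof: subtract the integral forms of the continuous and interpolated discrete dynamics, control the supremum via Burkholder--Davis--Gundy and Cauchy--Schwarz, split the drift mismatch into exactly the same three pieces (smoothness term, within-step gap, gradient-estimation error $\Phi^2$) and the diffusion mismatch into a switching-time term and a rate-perturbation term, then close with Gronwall. In fact you supply more detail than the paper does --- the uniform second-moment bound from dissipativity and $\eta < a/\alpha^2$, the Poisson-thinning coupling that converts the rate discrepancy $r\Phi^1$ into the $\sqrt{\max_k \mathbb{E}[|\Phi^1_k|^2]}$ term, and the explicit Gronwall closure --- all of which the paper defers to its cited references.
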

where $C_1$ depends on the temperatures $\tau_1, \tau_2$, the dimension $d$ of $\beta_t$, \textcolor{black}{the finite time $T$} and the other constants $\alpha, a, b$ in the assumptions (\ref{assumption1}) and (\ref{assumption2}); and $C_2$ depends on $T$ and $\alpha$; $C_3$ depends on \textcolor{black}{$r$ which defines the swapping rate, $d, T$ and $\alpha$}.
We give an outline of the proof and please refer to \cite{deng2020non, chen2020accelerating} for the details.
\begin{proof}
Integrate (\ref{strong_form}) from $0$ to $t$ and substract (\ref{semi_discretization}), if we assume that $\beta_t(0) = \beta_0^{\eta}$, we then have,
\begin{align}
    \beta_t-\beta_t^{\eta} = - \int_0^t\bigg(\nabla G(\beta_s) -\hat{G}(\beta_{[s/\eta]\eta}^{\eta}) \bigg)ds
    +\int_0^t\bigg(\Sigma_s-\hat{\Sigma}^{\eta}_{[s/\eta]\eta}\bigg) dW_s, \text{ for } \forall t\in[0, T]
\end{align}
where $[\cdot]$ denotes the floor function.
Take expectation over $\beta_t$, it then follows that:
\begin{align}
    \mathbb{E}[\sup_{0\leq t \leq T}\|\beta_t-\beta_{t}^{\eta}\|^2  ]\leq
    2T\underbrace{\mathbb{E}\bigg[
    \int_0^T\|\nabla G(\beta_s)-\nabla\hat{G}(\beta_{[s/\eta]\eta}^{\eta})\|^2ds
\bigg]}_{\mathcal{I}}+8\underbrace{\mathbb{E}\bigg[
    \int_0^T \|\Sigma_s-\hat{\Sigma}^{\eta}_{[s/\eta]\eta}\|^2ds
    \bigg]}_{\mathcal{J}},
\end{align}
note that the above inequality is estimated due to the Burkholder-Davis-Gundy and Cauchy-Schwarz inequalities.
We play the trick of triangle inequality; more precisely,
for any $s\in [k\eta, (k+1)\eta)]$ where $0\leq k\leq [T/\eta]$ is an integer, we have, 
\begin{align*}
    &\nabla G(\beta_s)-\nabla\hat{G}(\beta_{[s/\eta]\eta}^{\eta}) \\
    = 
    &\underbrace{\nabla G(\beta_s)-\nabla G(\beta_s^{\eta})}_{\mathcal{I}_1}+
    \underbrace{\nabla G(\beta_s^{\eta})-\nabla G(\beta_{[s/\eta]\eta}^{\eta})}_{\mathcal{I}_2}
    +\underbrace{\nabla G(\beta_{[s/\eta]\eta}^{\eta})-
    \nabla\hat{G}(\beta_{[s/\eta]\eta}^{\eta})}_{\mathcal{I}_3},
\end{align*}
and for $\mathcal{J}$ we have,
\begin{align*}
    \Sigma_s-\hat{\Sigma}^{\eta}_{[s/\eta]\eta} = \underbrace{\Sigma_s-\Sigma_{k\eta}^{\eta}}_{\mathcal{J}_1}+
    \underbrace{\Sigma_{k\eta}^{\eta}-
    \hat{\Sigma}^{\eta}_{[s/\eta]\eta}}_{\mathcal{J}_2},
\end{align*}
where $\Sigma_{k\eta}^{\eta}$ is the continuous-time interpolation of $\beta^{\eta}(k)$ without noise and satisfies the following equation:
\begin{align*}
    \beta_t^{\eta} = \beta_0^{\eta} - \int_0^t\nabla G(\beta_{k\eta}^{\eta})ds
    +\int_0^t\Sigma^{\eta}_{k\eta} dW_s.
\end{align*}
We can estimtae $\mathcal{I}$ and $\mathcal{J}$ separately and the theorem can be proved.
\end{proof}

\section{Numerical Examples}\label{sec:numerical}
In this section, we will demonstrate the performance of the proposed methods, multi-variance replica exchange Langevin diffusion (m-reSGLD), by a sequence of numerical experiments. In section (\ref{exp1}), an inverse problem will be presented; and the experiments are designed to show that the proposed approaches are able to capture the multiple modes. In the rest of the numerical experiments sections, we will use the proposed methods as an optimization algorithm; in particular, we will consider several partial diffusion equations (PDEs) in the physics-informed neural network (PINN) framework.
\subsection{Experiment 1}
In this section, we will study an inverse problem which is commonly studied in the water resources research \cite{li2015adaptive}.
The problem can be solved by the sampling methods in the Bayesian framework. 
Compared to the traditional gradient based methods, the proposed methods converges faster and are able to capture the multiple modes \textcolor{black}{(multiple inverse quantity of interests, or, multiple modal posteriors from the Bayesian perspective)}. This is the consequence of the information exchange between the low temperature chain and high temperature chain.
\label{exp1}
We consider the following parabolic equation which models the contamination flows in the water:
\begin{align}
    & u_t = \nabla\cdot(\nabla u) +f, x\in \Omega, t\in[0, 0.03]\\
    & u(x, 0) = e^{-\|x-x_0\|^2/\alpha}\\
    & u(x, t) = v(x, t), x\in\partial\Omega.
\end{align}
In the above formulation, $\Omega = [0, 1]\times [0, 1]$; $\|\cdot\|$ is the Euclidean distance.
$\alpha = 2h^2$ and $\beta = M/(2\pi h^2)$ are two positive constants; where $h$ can be taken as the radius of the pollution source and $M$ is the strength of the initial contamination.
In the following two experiments, we set $h = 0.1$ and $M = 1$. 
The initial condition is partially given; more precisely, the pollution source $x_0\in \mathbb{R}^2$ is unknown. The target of the experiments is to trace back the initial pollution source given some observation data which are the concentration $u(x, t)$ at some sensors in the domain. 

We set the solution of the forward problem to be $u(x, t) = \beta e^{-(x-x_0)^2/\alpha}e^{-t}$. We will perform two experiments; the first experiment has 2 inverse solutions and the second one has infinite number of solutions. In both experiments, we apply the standard finite element solver. To be more specific, we are going to use a finer finite element solver with $\Delta x = 1/50$ in the low temperature chain which reinforces the local convergence; while in the high temperature chain which is easier to travel globally, we use a coarser finite element solver with $\Delta x = 1/25$. The time differences for both experiments are set to be $\Delta t = 0.0005$.

\subsubsection{Two inverse solutions}
Two sensors are placed at $s_1 = (0.5, 0.3)$ and $s_2 = (0.5, 0.6)$ and sensors measure the concentration (solution) of the pollution at the terminal time $t = 0.03$; in our experiment, $u(s_1, t) = u(s_2, t) = e^{-r^2/\alpha}\cdot\beta\cdot e^{-0.03}$, where $r = 0.2$. This setting guarantees to have 2 inverse solutions. 
In order to show the efficiency and accuracy of the proposed method, we will compare the method, namely the multi-variance replica exchange Langevin diffusion (m-reSGLD), with the single chain SGLD and the same energy function reSGLD.
That is, the first comparison test is the standard single chain SGLD; while 
we are going to use the same fine solver ($\Delta x = 1/50$) in both chains in the latter reSGLD method.

In Figure (\ref{fig:exp_2_pollution}), m-reSGLD and reSGLD are run with 30,000 iterations while SGLD is run with 60,000 iterations.
We can observe from the picture (\ref{fig:exp_2_pollution}) that the proposed method can capture both pollution sources while the single chain method fails to capture the other source. The reSGLD with the same solver (right picture) also can capture the multimodes as expected; in fact, this method should be the most accurate one; however, since we use the same fine solver in both chains, the computational cost is extremely high. In our tests, the proposed method runs around $8,713.7s$ while the fine solver method runs $14,964.3s$ on the computing resource. 
\begin{figure}[H]
\centering
\includegraphics[scale = 0.45]{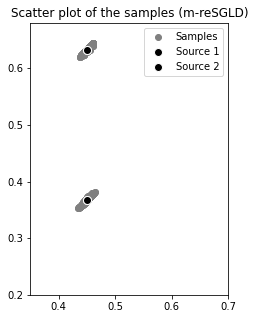}
\includegraphics[scale = 0.45]{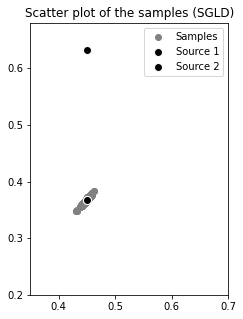}
\includegraphics[scale = 0.45]{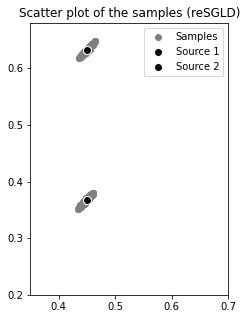}
\caption{Scatter plot of the proposed samples. We run 30,000 iterations for m-reSGLD and reSGLD while run 60,000 for the SGLD.
Left: m-reSGLD (proposed, we use two solvers in two chains); middle: single chain SGLD; right: reSGLD with the same solver in both chains. The black dots are the exact places of the pollution sources and the grey dots are the proposed samples. We can see that both m-reSGLD and reSGLD can capture \textcolor{black}{multiple modes of the inverse quantity of interest}; however, m-reSGLD takes $8,713.7s$ while reSGLD takes $14,964.3s$ to run 30,000 iterations. } 
\label{fig:exp_2_pollution}
\end{figure}

We also conduct the experiments using three methods but fixing the same computation time, i.e., the running time of all three methods are the same. We can see that the m-reSGLD can still capture the multiple modes; however, the reSGLD is unable to achieve the goal. The reason is:  
it takes a long time to evaluate 2 fine forward solvers (in both chains) and the computation terminates with not enough number of iterations.
The results are shown in Figure (\ref{fig:lin_exp_2_pollution}).
\begin{figure}[H]
\centering
\includegraphics[scale = 0.45]{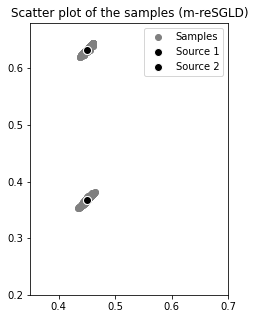}
\includegraphics[scale = 0.45]{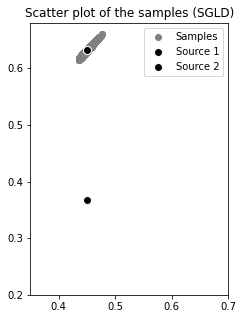}
\includegraphics[scale = 0.45]{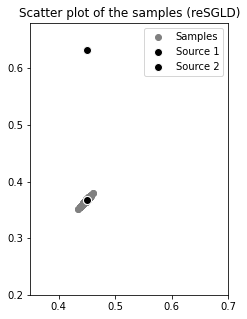}
\caption{Scatter plot of the proposed samples. The total computation time is fixed for all three methods. Left: m-reSGLD (2 solvers are used); middle: single chain SGLD; right: reSGLD with the same solver in both chains. The black dots are the exact places of the pollution sources and the grey dots are the proposed samples. The m-reSGLD can still capture \textcolor{black}{multiple modes of the inverse quantity of interest} however reSGLD fails due to the insufficient sampling iterations.} 
\label{fig:lin_exp_2_pollution}
\end{figure}

We want to comment on the accuracy of the solver used in the high temperature chain. There is a trade-off of the accuracy and the computational cost. More precisely, using a high accuracy solver can increase the accuracy; however, this will increase the computational cost and then limits the number of iterations of the algorithm. 

\subsubsection{Infinite inverse solutions}
To generate infinite number of solutions, we only place one sensor at position $(0.5, 0.3)$, we still measure the concentration at the terminal time and use the same value as before. It is not hard to see that: the inverse target is a circle centered at the sensor and the underlying problem has infinite number of solutions.
Same as the first experiment, we will compare our method with 2 other methods.

In the first set of experiments, which is shown in Figure (\ref{fig:exp_multi_pollution}), three algorithms are run with 60000 iterations. We can see that from the Figure (\ref{fig:exp_multi_pollution}) that, the proposed method greatly improves the sampling when compared to the single chain SGLD. The reSGLD method with the same energy method also works well; however, the computation time is around $25,832.8s$ while our proposed method only takes around $16,414.7s$ to finish generating the same number of samplings (iterations).
\begin{figure}[H]
\centering
\includegraphics[scale = 0.4]{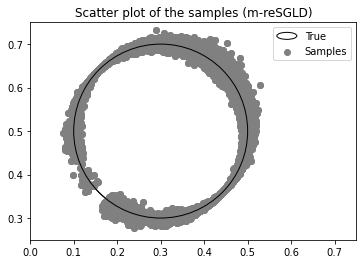}
\includegraphics[scale = 0.4]{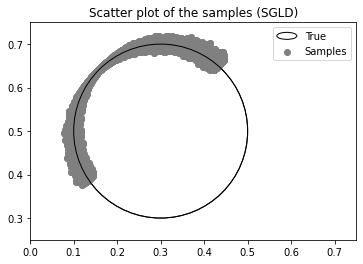}
\includegraphics[scale = 0.4]{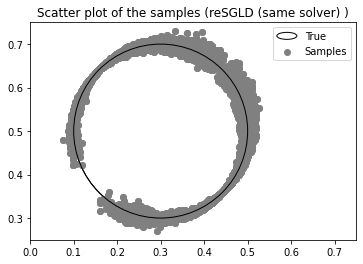}
\caption{Scatter plot of the proposed samples. Left: m-reSGLD; middle: single chain SGLD; right: reSGLD with the same solver in both chains. The black circle is the exact potential pollution sources and the grey dots are the proposed samples. We can see that both m-reSGLD and reSGLD can capture the multiple modes of inverse quantity of interests; however, m-reSGLD takes $16,414.7s$ while reSGLD takes $25,832.8s$ to run 30,000 iterations.} 
\label{fig:exp_multi_pollution}
\end{figure}

We also fix the computation time and compare the three methods, the results are demonstrated in Figure (\ref{fig:lin_exp_multi_pollution}).
We can see that m-reSGLD still performs well and is the best among the three methods.
In first set of experiments, the reSGLD is able to capture the multimodes; 
however, if the running time is the same with that of m-reSGLD, 
reSGLD performance is compromised.
This is due to the heavy computations of two fine forward solvers; as a result, training iterations are not enough.

\begin{figure}[H]
\centering
\includegraphics[scale = 0.4]{graphs/lin_replica_para_multi_half2550v2.png}
\includegraphics[scale = 0.4]{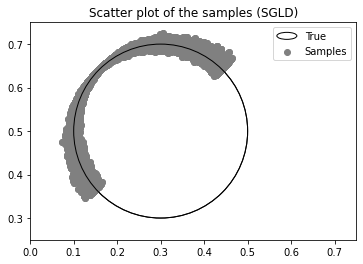}
\includegraphics[scale = 0.4]{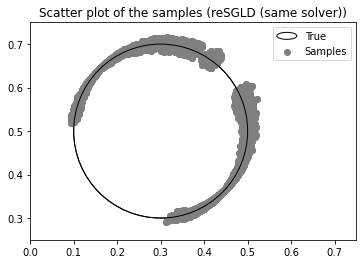}
\caption{Scatter plot of the proposed samples. Please note, the computation time is fixed same for three methods. Left: proposed method with the different solvers; middle: single chain SGLD; right: reSGLD with the same solver in both chains. The black circle is the exact potential pollution sources and the grey dots are the proposed samples. The m-reSGLD can still capture the multiple modes of inverse quantity of interests; however reSGLD fails due to the insufficient sampling iterations. } 
\label{fig:lin_exp_multi_pollution}
\end{figure}

\subsection{Bayesian Physics-informed neural network (PINN)}
Starting from this experiment, we are going to apply the proposed methods as an optimization approach in the neural network.
In particular, we will solve three PDEs in both forward and inverse settings with the physics-informed neural network (PINN) \cite{raissi2019physics}. 

 To learn the solutions for a given PDE with limited data, such as boundary/initial conditions and information about the source term, it is important to incorporate the available physical equations/laws in the design of deep neural networks. Physics-informed neural networks (PINN) \cite{liu2021deep, yang2021b, raissi2019physics}
 were proposed to realize the idea and have been successfully applied to many forward and inverse PDE applications subject to the law of physics that governs the data. Given a PDE of the form 
 \begin{align*}
     \mathcal{L}(u) &= f \text{ in } \omega \\
     \mathcal{B}(u) &= b \text{ on } \partial \omega
 \end{align*}
 where $\mathcal{L}$ is a differential operator and $\mathcal{B}$ is the boundary condition operator. $\omega$ is the computational domain and $f$ is the source term, $b$ is the boundary condition. Denote by $\mathcal{F}$ the neural network parameterized by $\beta$, the PINN aims to minimize the mean squared loss
 \begin{equation}
     \text{Loss}=\frac{w_1}{N_u} \sum_{i=1}^{N_u} |\mathcal{F}(x_i;\beta) - u(x_i)|^2 +  \frac{w_2}{N_f} \sum_{i=1}^{N_f} |\mathcal{L}(\mathcal{F}(x_i;\beta)) - f(x_i)|^2 + \frac{w_3}{N_b} \sum_{i=1}^{N_b} |\mathcal{B}(\mathcal{F}(x_i;\beta)) - b(x_i)|^2,
 \end{equation}
 where $w_1+w_2+w_3 = 1$ are the positive weights.
 
 However, {\color{black}{it was shown that the training of PINN is very challenging for PDEs, due to
significantly the multiple local optima in the loss function and varying gradients in the back-propagation \cite{paris_multiPinn}.}} Moreover, for the case when the data has noise, standard PINNs do not naturally have the capability to capture uncertainties. Recently, a Bayesian physics-informed neural network framework \cite{yang2021b} is proposed to deal with the issue. The Bayesian neural network differs from classical neural network by inducing a probability distribution on the model parameters of the network. This allows to estimate uncertainties in the predictions. Assume that the solution, source and boundary data has some noises, 
 \begin{equation*}
     \textbf{u}_{\epsilon} = \textbf{u} + \epsilon_{\textbf{u}}, \quad \quad \quad  \textbf{f}_{\epsilon} = \textbf{f} + \epsilon_{\textbf{f}},  \quad \quad \quad  \textbf{b}_{\epsilon} = \textbf{b} + \epsilon_{\textbf{b}}, 
 \end{equation*}
 where $\epsilon_{\textbf{u}}, \epsilon_{\textbf{f}}, \epsilon_{\textbf{b}}$ follow Gaussian distribution with zero mean, and standard deviation $\sigma_u, \sigma_f, \sigma_b$, respectively. Given data $\mathcal{D} = \mathcal{D}_u \cup  \mathcal{D}_f \cup  \mathcal{D}_b$, and $\mathcal{D}_u = \{\textbf{x}_{u}, \textbf{u}\}$, $\mathcal{D}_f = \{\textbf{x}_{f}, \textbf{f}\}$, $\mathcal{D}_b = \{\textbf{x}_{b}, \textbf{b}\}$,
 the likelihood is defined to be $p(\mathcal{D}|\beta)=  p(\mathcal{D}_u|\beta)p(\mathcal{D}_f|\beta)p(\mathcal{D}_b|\beta)$,
 \begin{align*}
    \mathcal{D}_u|\beta &\sim N(\mathcal{F}(x_u;\beta), \sigma_u),\\
    \mathcal{D}_f|\beta &\sim N(\mathcal{L}(\mathcal{F}(x_f;\beta)), \sigma_f),\\
    \mathcal{D}_b|\beta &\sim N(\mathcal{F}(x_b;\beta), \sigma_b).
 \end{align*}
The posterior distribution is then $p(\beta|\mathcal{D})\propto p(\mathcal{D}|\beta) p(\beta)$. For the posterior inference, the variational inference method and some variants of MCMC methods are commonly used.

 In this work, we adopt the replica exchange stochastic gradient MCMC approach. It stems from stochastic gradient Langevin dynamics (SGLD) , and is a simulated tempering approach which can accelerate the training and guarantees a better global point estimate.

\subsection{Experiment 3 (Quasi-gas dynamics (QGD))}
We solve a quasi-gas dynamics (QGD) model which is a kinetic equations under
the assumption that the distribution function is similar to a locally Maxwellian representation.
The QGD model is extensively studied and is not easy to be solved by the numerical methods \cite{chetverushkin2020computational, chung2021computational}; and we hence propose to solve it by the deep learning methods.
Consider the QGD equation in a polygonal domain $\Omega \subset \mathbb{R}^d$ ($d = 1$): 
\begin{eqnarray} \label{eqn:qgd}
\begin{split}
u_t + \alpha u_{tt} -  \nabla \cdot ( \kappa \nabla u) & =  f \quad &\text{in } (0,T] \times \Omega, \\
u|_{t=0} & = u_0 \quad &\text{in } \Omega, \\
u_t|_{t=0} & = v_0 \quad &\text{in } \Omega, \\
u & = 0 \quad &\text{on } \partial \Omega. 
\end{split}
\end{eqnarray}
Here, $T>0$ is the terminal time and $\alpha$ is a positive constant which measures the diffusion effect, $\kappa$ is conductivity of the model and is strictly positive, $f$ is the source of the equation with a proper regularity.
Furthermore, we assume that the initial conditions $u_0 \in H_0^1(\Omega)$ and $v_0 \in L^2(\Omega)$. 
In our experiments, 
$\Omega = [0, 1]$ and $T = 0.001$. 
We set the solution to be $u(x, t) = sin(2\pi x)e^{-t}$ and the initial conditions and source can be derived then.

In this work, $\alpha = 1$ and $\kappa = 1$. 
We design a four layer fully connected neural network 
($2\times 32\rightarrow 32\times 32 \rightarrow 32\times 32 \rightarrow 32\times 1$) activated by the $Tanh$ function.
$64$ points are uniformly placed in $[0, 1]$ in the low temperature chain 
and to accelerate the computation, $48$ points are used in the high temperature chain.
In both chains, we use $8$ points in time including the initial condition to discretize the time direction.

To demonstrate the performance of our method, we will compare the proposed method (m-reSGLD) with the single chain method, i.e., SGLD without the replica exchange.
\textcolor{black}{We will run the SGLD for two times: a low temperature SGLD (lt-SGLD) and a high temperature SGLD (ht-SGLD) respectively. 
To be more specific, the temperature in lt-SGLD is set to be the same as the low temperature in the m-reSGLD; meanwhile, the temperature in ht-SGLD is is set to be the same as the high temperature in the  m-reSGLD.}
The relative convergence is shown in Figure (\ref{fig:qgd_convergence}) and the mean and variance of the relative errors of the three methods are shown in Table (\ref{table_qgd}).
\begin{figure}[H]
\centering
\includegraphics[scale = 0.6]{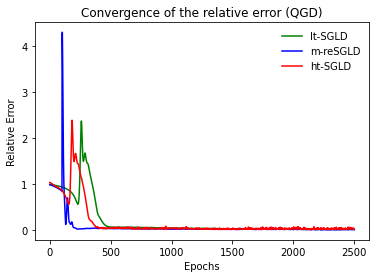}
\caption{Convergence of the relative error. The y axis is the relative error; x axis is the training epochs. 
Colors indicate the methods. Green (lt-SGLD): \textcolor{black}{SGLD with a temperature that is the same as the low temperature in the m-reSGLD}. Blue (m-reSGLD): proposed replica exchange method. Red (ht-SGLD): SGLD with a temperature that is the same as the high temperature in \textcolor{black}{the} m-reSGLD. } 
\label{fig:qgd_convergence}
\end{figure}

\begin{table}[H]
\centering
\begin{tabular}{||c c c||} 
\hline
 Methods & Mean of the relative error & Variance of the relative error \\ [0.5ex] 
\hline
\textcolor{black}{m-reSGLD}  & 0.01918168  & 2.324e-05  \\ [0.5ex]
\hline
lt-SGLD  & 0.02376848 & 0.00033714 \\ [0.5ex]
\hline
ht-SGLD  & 0.03477052 & 0.00014472  \\ [0.5ex]
\hline
\end{tabular}
\caption{QGD forward PINN. Mean and variance of the relative errors after burn-in.}
\label{table_qgd}
\end{table}
From Figure (\ref{fig:qgd_convergence}) and Table (\ref{table_qgd}), 
we can observe that the proposed method (blue curve) decays very fast, in particular when compared to the low temperature SGLD (green curve). 
The high temperature SGLD also has a quick convergence at the beginning since the high temperature encourages the global travel and the particle may move to the global optimal point earlier; however, the relative error of the ht-SGLD is the largest at the end and 
the blue curve (proposed m-reSGLD) stabilizes with the lowest relative error. 
Intuitively, this is due to the better local convergence of the low temperature SGLD.

We also plot the solution of the last time step in Figure (\ref{fig_qgd_sol}) as reference.
\begin{figure}[H]
\centering
\includegraphics[scale = 0.6]{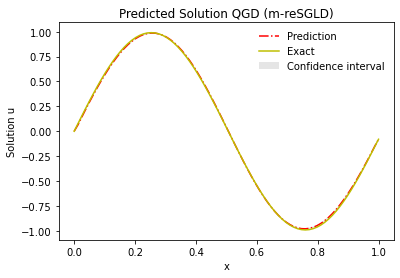}
\caption{\textcolor{black}{Exact solution at the terminal time (yellow curve) and confidence interval (grey band) of the predicted solution at the terminal time (red dashed curve). The prediction is calculated as the mean of the predicted solution of each epoch after the burn-in. The maximum prediction variance across the spatial direction $x$ is $0.011915$, the minimum is $0.005009$ and the mean is $0.00718$. The y axis is the solution at the terminal time and the x axis is the spatial domain of the problem.}} 
\label{fig_qgd_sol}
\end{figure}

\subsection{Experiment 4 (Inverse QGD)}
In this section, we are going to consider the QGD equation again but in the inverse setting, that is, $\alpha$ is not given and we will learn this constant together with the solution of the equation. The network is kept the same as before except that the last layer is of size $32\times 2$. We need additional observation information and hence use 10 true solution points equally spaced.
The  relative  convergence  is  shown  in  Figure (\ref{qgd_inv_convergence}) and the mean and variance of the relative errors of the three methods are shown in Table (\ref{table_qgd_inv_conv}).
\begin{figure}[H]
\centering
\includegraphics[scale = 0.6]{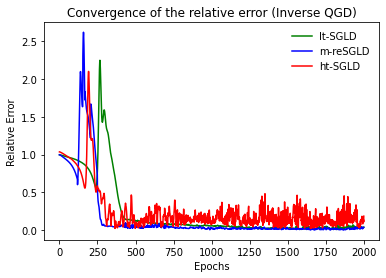}
\caption{Convergence of the relative error. The y axis is the relative error; x axis is the training epochs. 
Colors indicate the methods. Green (lt-SGLD): SGLD with a temperature that is the same as the low temperature in the m-reSGLD. Blue (m-reSGLD): proposed replica exchange method.
Red (ht-SGLD): SGLD with a temperature that is the same as the high temperature in the m-reSGLD. } 
\label{qgd_inv_convergence}
\end{figure}

\begin{table}[H]
\centering
\begin{tabular}{||c c c||} 
\hline
 Methods & Mean of the relative error & Variance  of the relative error \\ [0.5ex] 
\hline
\textcolor{black}{m-reSGLD}  & 0.02559266 & 0.00019022  \\ [0.5ex]
\hline
\textcolor{black}{lt-SGLD}  & 0.04922535 & 0.00071017  \\ [0.5ex]
\hline
\textcolor{black}{ht-SGLD}   & 0.15169148 & 0.00552021  \\ [0.5ex]
\hline
\end{tabular}
\caption{Inverse QGD PINN problem. Mean and variance of the relative errors after burn-in.}
\label{table_qgd_inv_conv}
\end{table}
From Table (\ref{table_qgd_inv_conv}) and Figure (\ref{qgd_inv_convergence}), we observe the similar results as before. The proposed method (m-reSGLD) demonstrates a faster convergence and stabilizes with a smaller relative error. We also plot the 
the solution of the last time step in Figure (\ref{fig_inv_qgd_sol}). 
\begin{figure}[H]
\centering
\includegraphics[scale = 0.6]{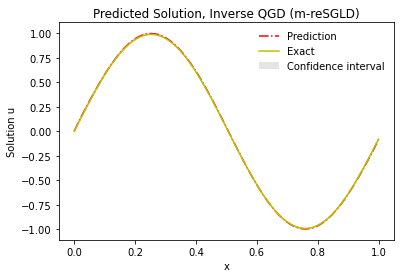}
\caption{Exact solution at the terminal time (yellow curve) and confidence interval (grey band) of the predicted solution at the terminal time (red dashed curve). The prediction is calculated as the mean of the predicted solution of each epoch after the burn-in. The maximum prediction variance across the spatial direction $x$ is $0.023186$, the minimum variance is $0.009243$ and the mean is $0.012495$.
The y axis is the solution at the terminal time and the x axis is the spatial domain of the problem.} 
\label{fig_inv_qgd_sol}
\end{figure}

Besides the convergence of the solution, we also present the prediction of the inverse target $\alpha$. Please check Table (\ref{table_qgd_inv}) for the details. We can observe that the proposed method gives us the best solution.
\begin{table}[H]
\centering
\begin{tabular}{||c c c c||} 
\hline
 Methods & True $\alpha$ & Mean of $\alpha$ & Variance of $\alpha$  \\ [0.5ex] 
\hline
\textcolor{black}{m-reSGLD} & 1.0 & 1.0033238 & 1.9756535e-05  \\ [0.5ex]
\hline
lt-SGLD & 1.0  & 1.0096047 & 0.0001136835  \\ [0.5ex]
\hline
ht-SGLD  & 1.0 & 0.9906514 & 0.0001133237  \\ [0.5ex]
\hline
\end{tabular}
\caption{Mean and variance of the inversion quantity $\alpha$ after burn in for the inversion QGD. The true $\alpha = 1$.}
\label{table_qgd_inv}
\end{table}

\subsection{Experiment 2 (Nonlinear Inverse)}
We consider an inverse PINN problem as fellow:
\begin{align}
    & -u_{xx}+\alpha u^2 = f, x\in \Omega = [-1, +1];\\
    & u(x) = u_0, x\in \partial \Omega,
\end{align}
where $u_0$ is a constant and will be derived according to the pre-set solution.
In this example, solution is set to be $u(x) = e^{-x^2/0.5}$ and the true $\alpha = 0.7$. 
The target is to learn the solution $u$ and find $\alpha$ at the same time.
We design a four-layer fully connected neural network 
($2\times 32\rightarrow 32\times 32 \rightarrow 32\times 32 \rightarrow 32\times 2$) activated by the $Tanh$ function.
$30$ points are uniformly placed for training in $[0, 1]$ in the low temperature chain 
and to accelerate the computation, $20$ points are used in the high temperature chain. 
For the observation, we place 5 sensors uniformly to measure the solutions of the equation.

In order to show the fast convergence and the low variance. We are going to compare our method (m-reSGLD) with the SGLD without exchange.
The temperatures of the single chain methods are set to be same with the low and high temperature in the proposed method respectively. More precisely,
in Figure (\ref{fig:inv_convergence}),
the red curve, \textcolor{black}{high-temperature SGLD (ht-SGLD)}, is the convergence of the relative error of the SGLD method with a temperature the same as the high temperature used in the proposed m-reSGLD (blue curve).
The green curve, \textcolor{black}{low-temperature SGLD (lt-SGLD)},
is the convergence of the relative error of the SGLD method with a temperature the same as the low temperature used in the proposed m-reSGLD.
The convergence of the method is demonstrated in Figure (\ref{fig:inv_convergence}); the mean and variance of the relative errors are shown in Table (\ref{table_inv}).
\begin{figure}[H]
\centering
\includegraphics[scale = 0.6]{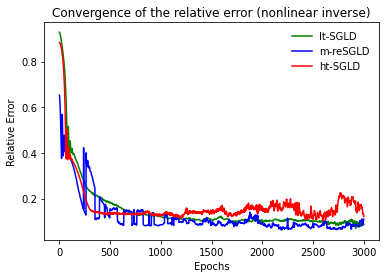}
\caption{Convergence of the relative error (Nonlinear Inverse). The y axis is the relative error; x axis is the training epochs. 
Colors indicate the methods. Green (lt-SGLD): SGLD with a temperature that is the same as the low temperature in the m-reSGLD. Blue (m-reSGLD): proposed replica exchange method. Red (ht-SGLD): SGLD with a temperature that is the same as the high temperature in the m-reSGLD. } 
\label{fig:inv_convergence}
\end{figure}

From the Figure (\ref{fig:inv_convergence}), we can observe that the blue curve (m-reSGLD) has a fast decay and converges to the lowest relative error. The fast decay can be intuitively explained: the high chain is exploring and once it is close to the optimal points, the likelihood then becomes large, which results in a bigger swapping rate (\ref{pp_rate}). \textcolor{black}{This mechanism will help the particle explore the whole loss function landscape and accelerate the convergence.}

\begin{table}[H]
\centering
\begin{tabular}{||c c c||} 
\hline
 Methods & Mean of relative error & Variance of relative error  \\ [0.5ex] 
\hline
m-reSGLD  & 0.07956814 & 8.962e-05  \\ [0.5ex]
\hline
lt-SGLD  & 0.09911878 & 5.032e-05  \\ [0.5ex]
\hline
ht-SGLD  & 0.15661082 & 0.00074168  \\ [0.5ex]
\hline
\end{tabular}
\caption{Mean and variance of the solution relative error after the burn in for the nonlinear inversion PINN.}
\label{table_inv}
\end{table}

The mean and variance of the predicted $\alpha$ are shown in Table (\ref{table_inv_alpha}).
\begin{table}[H]
\centering
\begin{tabular}{||c c c c||} 
\hline
 Methods & True $\alpha$ & Mean of $\alpha$ & Variance of $\alpha$ \\ [0.5ex] 
\hline
\textcolor{black}{m-reSGLD}& 0.7  & 0.6679792 &  0.001696  \\ [0.5ex]
\hline
lt-SGLD & 0.7 & 0.6406617 & 0.000534  \\ [0.5ex]
\hline
ht-SGLD & 0.7 & 0.8007492 & 0.002135 \\ [0.5ex]
\hline
\end{tabular}
\caption{Nonlinear inversion PINN. Mean and variance of the inversion quantity $\alpha$ after burn in for the inversion QGD. The true $\alpha = 0.7$.}
\label{table_inv_alpha}
\end{table}

The sample solutions are demonstrated in Figure (\ref{fig:inv_sol}).

\begin{figure}[H]
\centering
\includegraphics[scale = 0.35]{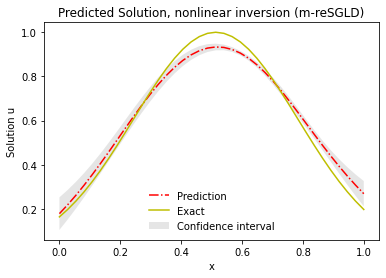}
\includegraphics[scale = 0.35]{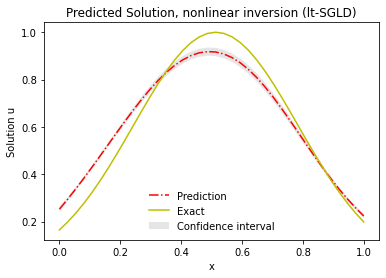}
\includegraphics[scale = 0.35]{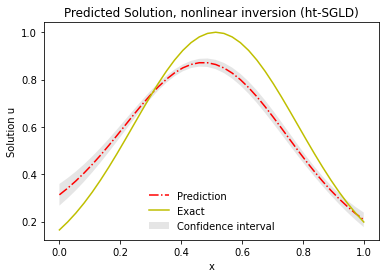}
\caption{Demonstration of the solutions for m-reSGLD (Left), lt-SGLD (middle) and ht-SGLD (right).
Exact solution : yellow curve. Confidence interval of the prediction: grey band. Predicted solution: red dashed curve. The prediction is calculated as the mean of the predicted solution of each epoch after the burn-in. The maximum prediction (m-reSGLD) variance across the spatial direction $x$ is $0.073663$, the minimum variance is $0.010018$ and the mean is $0.0307$. The y axis is the solution of the problem and the x axis is the spatial domain.}
\label{fig:inv_sol}
\end{figure}

\section{Conclusion}
The traditional replica exchange method assumes that the energy function estimates are the same; this means that in the Bayesian framework of solving inverse problems, one can only use the same solver in both chains; this results in a high computational cost.
In this work, we generalize the replica exchange method with different energy function estimates.
\textcolor{black}{To be more specific,
people can use a fine resolution solver in the low temperature chain while applying a coarse resolution solver in the high temperature chain.}
We hence use \textcolor{black}{forward solvers with different fidelities};
this setting can save a lot of computational cost in the high temperature chain and is much more efficient when compared to the \textcolor{black}{single} fidelity solver method while maintaining the accuracy.
We give an unbiased swapping rate and give an estimation of the discretization error. At the end, we verify that our method can capture \textcolor{black}{the multiple-mode non-Gaussian posterior distribution} in the Bayesian inverse problems; the examples demonstrate that our methods can beat the classical methods both in terms of the \textcolor{black}{efficiency} and accuracy.
We also apply the method as an sampling algorithm and solve several \textcolor{black}{forward and inverse} mathematical problems in the physics-informed \textcolor{black}{neural network} framework. 
Compared to the classical methods, our method reduces the relative error \textcolor{black}{of the solutions for the forward problems and the error of the inverse quantity of interests} for the inversion problems.

\bibliographystyle{abbrv}
\bibliography{references}
\end{document}